%
%
%

\documentclass[graybox]{svmult}


\usepackage{type1cm}        
%
\usepackage{makeidx}         
\usepackage{graphicx}        
\usepackage{multicol}        
\usepackage[bottom]{footmisc}

\usepackage{newtxtext}       %
\usepackage[varvw]{newtxmath}       

\usepackage{pgfplots}
\usepackage{comment}
\usepackage{tikz}
\newcommand{\Int}{\mathrm{int}}


\makeindex             


\begin{document}

\title*{Cross-points in the Neumann-Neumann method}
\author{Bastien Chaudet-Dumas and Martin J. Gander}
\institute{Bastien Chaudet-Dumas \at University of Geneva, Switzerland \email{bastien.chaudet@unige.ch}
\and Martin J. Gander \at University of Geneva, Switzerland \email{martin.gander@unige.ch}}
%
%
\maketitle

\section{Introduction}
\label{sec:1}

The Neumann-Neumann method (NNM), first introduced in \cite{Bourgat:1989:VFA} in the case of two subdomains, is among the most popular non-overlapping domain decomposition methods.  However, when used as a stationary solver at the continuous level, it has been observed that the method faced well-posedness issues in the presence of cross-points, see \cite{chaouqui2018local}. Here, our goal is to analyze in detail the behaviour of the NNM near cross-points on a simple, but rather instructive, bidimensional configuration.

Let $\Omega\subset \mathbb{R}^2$ be the square $(-1,1)\times(-1,1)$, divided into four non-overlapping square subdomains $\Omega_i$, $i\in\mathcal{I}:=\{1,2,3,4\}$, see Figure \ref{fig:SchDD}. This leads to one interior cross-point (red dot), and four boundary cross-points (black dots). We denote the interfaces between adjacent subdomains by $\Gamma_{ij}:= \Int(\partial\Omega_i\cap\partial\Omega_j)$, the skeleton of the partition by $\Gamma:=\bigcup_{i,j} \overline{\Gamma}_{ij}$, and $\partial\Omega_i^0:=\partial\Omega_i\cap \partial\Omega$.
We consider the Laplace problem with Dirichlet boundary conditions on $\Omega$, that is: find $u$ solution to
\begin{equation}
        -\Delta u = f \: \mbox{ in } \Omega, \quad u = g\: \mbox{ on } \partial\Omega,
    \label{eqn:DirPb}
\end{equation}
where $f\in L^2(\Omega)$ and $g\in H^{\frac{3}{2}}(\partial\Omega)$, ensuring that $u\in H^2(\Omega)$.
%
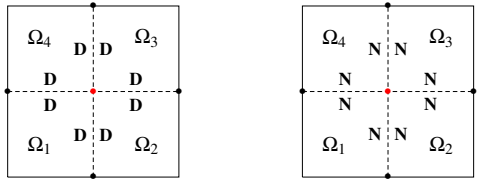
\begin{figure}
\centering
\resizebox{0.22\textwidth}{!}{
    \begin{tikzpicture}

    \draw[black, thin] (-2,-2) -- (2,-2) -- (2,2) -- (-2, 2) -- (-2,-2);
    \draw[black, densely dashed] (0,-2) -- (0,2); 
    \draw[black, densely dashed] (-2,0) -- (2,0); 
    
    
    \node[] at (0,-2) {\normalsize{$\bullet$}};
    \node[] at (0,2) {\normalsize{$\bullet$}};
    \node[] at (-2,0) {\normalsize{$\bullet$}};
    \node[] at (2,0) {\normalsize{$\bullet$}};
    \node[red] at (0,0) {\normalsize{$\bullet$}};
    
    \node[] at (-1.25,-1.25) {\Large{$\Omega_1$}};
    \node[] at (1.25,-1.25) {\Large{$\Omega_2$}};
    \node[] at (1.25,1.25) {\Large{$\Omega_3$}};
    \node[] at (-1.25,1.25) {\Large{$\Omega_4$}};

    \node[] at (-0.3,-1) {\large{\textbf{D}}};
    \node[] at (-1,-0.3) {\large{\textbf{D}}};
    \node[] at (0.3,1) {\large{\textbf{D}}};
    \node[] at (1,0.3) {\large{\textbf{D}}};
    \node[] at (0.3,-1) {\large{\textbf{D}}};
    \node[] at (-1,0.3) {\large{\textbf{D}}};
    \node[] at (-0.3,1) {\large{\textbf{D}}};
    \node[] at (1,-0.3) {\large{\textbf{D}}};
    
    \end{tikzpicture}
}
\hspace{4em}
\resizebox{0.22\textwidth}{!}{
    \begin{tikzpicture}

    \draw[black, thin] (-2,-2) -- (2,-2) -- (2,2) -- (-2, 2) -- (-2,-2);
    \draw[black, densely dashed] (0,-2) -- (0,2); 
    \draw[black, densely dashed] (-2,0) -- (2,0); 
    
    \node[] at (0,-2) {\normalsize{$\bullet$}};
    \node[] at (0,2) {\normalsize{$\bullet$}};
    \node[] at (-2,0) {\normalsize{$\bullet$}};
    \node[] at (2,0) {\normalsize{$\bullet$}};
    \node[red] at (0,0) {\normalsize{$\bullet$}};
    
    \node[] at (-1.25,-1.25) {\Large{$\Omega_1$}};
    \node[] at (1.25,-1.25) {\Large{$\Omega_2$}};
    \node[] at (1.25,1.25) {\Large{$\Omega_3$}};
    \node[] at (-1.25,1.25) {\Large{$\Omega_4$}};

    \node[] at (-0.3,-1) {\large{\textbf{N}}};
    \node[] at (-1,-0.3) {\large{\textbf{N}}};
    \node[] at (0.3,1) {\large{\textbf{N}}};
    \node[] at (1,0.3) {\large{\textbf{N}}};
    \node[] at (0.3,-1) {\large{\textbf{N}}};
    \node[] at (-1,0.3) {\large{\textbf{N}}};
    \node[] at (-0.3,1) {\large{\textbf{N}}};
    \node[] at (1,-0.3) {\large{\textbf{N}}};
    
    \end{tikzpicture}
}
  \caption{Transmission conditions of the standard NNM for $u$ (left) and $\psi$ (right).}
  \label{fig:SchDD}
\end{figure}

Given an initial couple $(u^0,\psi^0)$, and a relaxation parameter $\theta\in \mathbb{R}$, each iteration $k\geq 1$ of the NNM applied to \eqref{eqn:DirPb} can be split into two steps:
\begin{itemize}
    \item \textit{(Dirichlet step)} Solve for all $i\in \mathcal{I}$,
    \begin{equation*}
        \begin{aligned}
            -\Delta u_i^k  &= f \: \mbox{ in } \Omega_i \:, \quad
            u_i^k = g \: \mbox{ on } \partial\Omega_i^0 \:, \\
            u_i^k &= u_i^{k-1}-\theta\left( \psi_i^{k-1}+\psi_j^{k-1} \right)\: \mbox{ on } \Gamma_{ij}, \: \forall j\in \mathcal{I}\: \mbox{ s.t. } \Gamma_{ij}\neq\emptyset\:.
        \end{aligned}
    \end{equation*}
    \item \textit{(Neumann step)} Compute the correction $\psi^k$, that is, solve for all $i\in \mathcal{I}$,
    \begin{equation*}
        \begin{aligned}
            -\Delta \psi_i^k &= 0 \: \mbox{ in } \Omega_i \:, \quad
            \psi_i^k = 0 \: \mbox{ on } \partial\Omega_i^0 \:, \\
            \partial_{n_i}\psi_i^k &= \partial_{n_i}u_i^k+\partial_{n_j}u_j^k\: \mbox{ on } \Gamma_{ij}, \: \forall j\in \mathcal{I}\: \mbox{ s.t. } \Gamma_{ij}\neq\emptyset \:.
        \end{aligned}
    \end{equation*}
\end{itemize}
For the method to be well defined, it is assumed in the rest of this paper that the initial couple $(u^0,\psi^0)$ is compatible with the Dirichlet boundary condition, i.e. it satisfies: $u^0\in H^2(\Omega)$, $\psi^0\in H^2(\Omega)\cap H^1_0(\Omega)$ and $u^0\mid_{\partial\Omega\cap\Gamma} = g\mid_\Gamma$.

\section{Convergence analysis of the Neumann-Neumann method}

\begin{definition}
  A measurable function $h:\Omega\to\mathbb{R}$ is said to be \textit{even symmetric} (resp. \textit{odd symmetric}) if for a.e. $(x,y) \in \Omega$, $h(-x,-y)=h(x,y)$ (resp. $-h(x,y)$). Moreover, any measurable function $h$ can be uniquely decomposed into $h=h_e+h_o$ where $h_e$ is even symmetric and $h_o$ is odd symmetric.
\end{definition}
Following this notion, as in \cite{chaudet2022cross1}, we introduce the so-called \textit{even symmetric} and \textit{odd symmetric parts} of problem \eqref{eqn:DirPb}: find $u_e$ and $u_o$
solutions to
\begin{subequations}\label{eq:EvenOddDirPb}
    \begin{align}
        -\Delta u_e = f_e \: \mbox{ in } \Omega,& \quad  u_e = g_e\: \mbox{ on } \partial\Omega, \label{subeqn:EvenDirPb} \\
        -\Delta u_o = f_o \: \mbox{ in } \Omega,& \quad  u_o = g_o\: \mbox{ on } \partial\Omega. \label{subeqn:OddDirPb}
    \end{align}
\end{subequations}
If $u$ denotes the solution to \eqref{eqn:DirPb}, it is known (see \cite{chaudet2022cross1}) that the unique solutions $u_e$ and $u_o$ to 
these subproblems are precisely the even symmetric part and the odd symmetric part of $u$. In what follows, we will perform the convergence analysis of the NNM separately for the errors associated with the even and odd symmetric subproblems, as they lead to completely different behaviours of the method. \\

\textbf{Case of the even symmetric part.}
The next Theorem states that the NNM is convergent when applied to the even symmetric part of \eqref{eqn:DirPb}.

\begin{theorem}
    Taking $(u^0_e,\psi^0_e)$ as initial couple for the NNM applied to \eqref{subeqn:EvenDirPb} produces a sequence $\left\{u_e^k\right\}_k$ that converges geometrically to the solution $u_e$ with respect to the $L^2$-norm and the broken $H^1$-norm for any $\theta\in (0,\frac{1}{2})$. Moreover, the convergence factor is given by $|1-4\theta|$, which also proves that the method becomes a direct solver for the specific choice $\theta=\frac{1}{4}$.
    \label{thm:GeoCvgEvenDirNN}
\end{theorem}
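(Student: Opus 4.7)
The plan is to first exploit linearity: introducing the error $\tilde u^k := u_e^k - u_e$ and $\tilde\psi^k := \psi_e^k$, one sees that $(\tilde u^k,\tilde\psi^k)$ satisfies the NNM iteration with homogeneous data ($f\equiv 0$, $g\equiv 0$), so convergence of $u_e^k$ to $u_e$ is equivalent to $\tilde u^k\to 0$. Because $\Omega$ and the partition $\{\Omega_i\}_{i\in\mathcal{I}}$ are invariant under the central symmetry $(x,y)\mapsto(-x,-y)$ and the iteration commutes with it, even symmetry is preserved at every step. As a consequence, the traces of the iterates on $\{x=0\}\cap\Omega$ (resp.\ on $\{y=0\}\cap\Omega$) are even functions of $y$ (resp.\ of $x$) vanishing at $\pm 1$, and I would expand them in the cosine basis $\{\cos((2k+1)\pi y/2)\}_{k\geq 0}$.

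Next, I would rewrite the NNM as a fixed-point iteration on these interface traces: after the Dirichlet step, each $\tilde u_i^k$ is the harmonic extension of its interface Dirichlet data (with zero data on $\partial\Omega_i^0$); after the Neumann step, each $\tilde\psi_i^k$ is the harmonic extension of its prescribed Neumann data on the interfaces (with zero Dirichlet data on $\partial\Omega_i^0$). Composing both steps with the relaxation produces a linear operator $T_\theta$ on the trace space, and the theorem amounts to showing that $T_\theta$ restricted to the even symmetric subspace has spectral radius $|1-4\theta|$.

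To evaluate $T_\theta$, I would use separation of variables on $\Omega_1=(-1,0)^2$ (the other three subdomains being images of $\Omega_1$ under the reflections of the square). Harmonic extensions with zero data on $\{x=-1\}\cup\{y=-1\}$ and prescribed data on $\Gamma_{12}\cup\Gamma_{14}$ decompose as series involving the products $\cos((2k+1)\pi y/2)\,\sinh((2k+1)\pi(x+1)/2)$ and their $x\leftrightarrow y$ counterparts. Using that the four subdomains are congruent and that even symmetry identifies opposite ones, the combined contributions of the four Dirichlet-to-Neumann operators reduce, after the Neumann step and the relaxation, to multiplication by exactly $1-4\theta$ on each Fourier mode; in particular, $T_{1/4}=0$, yielding the direct-solver property.

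The main technical obstacle is this per-mode computation, because the cosine basis does not vanish at the cross-point $(0,0)$: the harmonic extensions in each subdomain couple the two interfaces $\Gamma_{12}$ and $\Gamma_{14}$, so the iteration operator mixes their Fourier coefficients within each subdomain. Working out the resulting block, exploiting the dihedral symmetries of the square to diagonalise it, and checking that the associated series converges in the broken $H^1$-norm (which uses the $H^2$ compatibility of $(u^0,\psi^0)$) is the heart of the proof; convergence in $L^2(\Omega)$ then follows from the continuous embedding $H^1(\Omega_i)\hookrightarrow L^2(\Omega_i)$ on each subdomain.
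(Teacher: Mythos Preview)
Your Fourier-on-the-skeleton programme is a genuinely different route from the paper's, and the ``main technical obstacle'' you flag (mode coupling at the cross-point, block diagonalisation) is precisely what the paper's argument sidesteps. The key structural fact you are underusing is that even symmetry of the \emph{global} initial data $(e_e^0,\psi_e^0)$, together with the continuity of their traces across each $\Gamma_{ij}$, forces the four local errors after the first Dirichlet step to be related by \emph{all} reflections of the square, not only by the central symmetry $\Omega_1\leftrightarrow\Omega_3$, $\Omega_2\leftrightarrow\Omega_4$: one gets $e_{e,2}^1(x,y)=e_{e,1}^1(-x,y)$, $e_{e,3}^1(x,y)=e_{e,1}^1(-x,-y)$, $e_{e,4}^1(x,y)=e_{e,1}^1(x,-y)$. (The point is that even symmetry, restricted to the skeleton $\{x=0\}\cup\{y=0\}$, already gives the reflection $x\mapsto -x$ and $y\mapsto -y$ separately on the interface data.) With this full dihedral symmetry in hand, on every $\Gamma_{ij}$ one has $\partial_{n_j}e_{e,j}^1=\partial_{n_i}e_{e,i}^1$, so the Neumann datum for $\psi_{e,i}^1$ is exactly $-2\,\partial_{n_i}e_{e,i}^1$, and uniqueness yields $\psi_{e,i}^1=-2\,e_{e,i}^1$ \emph{as functions on $\Omega_i$}. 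Plugging this into the next Dirichlet transmission condition gives $e_{e,i}^2=(1-4\theta)\,e_{e,i}^1$ pointwise, and induction delivers $e_e^k=(1-4\theta)^{k-1}e_e^1$ in $\Omega\setminus\Gamma$.

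Thus the paper never touches a Fourier coefficient: the iteration is literally multiplication by $1-4\theta$ at the level of the error functions, and the $L^2$ and broken $H^1$ estimates are immediate. Your approach would presumably recover the same factor after the block computation, but that computation is left undone in your proposal, and it is the hard part of your outline; the paper's symmetry argument shows it is in fact unnecessary. What your route would buy, if completed, is a template that might survive in less symmetric geometries where the reflection trick is unavailable; what the paper's route buys is a two-line recursion with no analysis of DtN operators at all.
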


\begin{proof}
    As in \cite{chaudet2022cross1} for the Dirichlet-Neumann method, let us study the first iterations of the NNM in terms of the local errors $e_{e,i}^k:=u_e|_{\Omega_i}-u_{e,i}^k$.

    \noindent $\bullet$ \textit{Iteration $k=1$, Dirichlet step:} 
    In each $\Omega_i$, $i\in \mathcal{I}$, the errors satisfy
            \begin{equation*}
                \begin{aligned}
                    -\Delta e_{e,i}^1 &= 0 \: \mbox{ in } \Omega_i \:, \quad
                    e_{e,i}^1 = 0 \: \mbox{ on } \partial\Omega_i^0 \:, \\
                    e_{e,i}^1 &= e_{e,i}^0+\theta\left( \psi_{e,i}^0+\psi_{e,j}^0\right)  \: \mbox{ on } \Gamma_{ij}, \: \forall j\in \mathcal{I}\: \mbox{ s.t. } \Gamma_{ij}\neq\emptyset \:.
                \end{aligned}
            \end{equation*}
            Since $(u_e^0,\psi_e^0)$ is compatible with the even symmetric part of the Dirichlet boundary condition, $e_{e,i}^1$ exists and is unique in $H^1(\Omega_i)$. Using the even symmetry properties of $e_e^0$ and $\psi_e^0$, one can deduce that the $e_{e,i}^1$, for $i\in\{2,3,4\}$, can be expressed in terms of $e_{e,1}^1$ as follows:
            \begin{equation*}
                \begin{aligned}
                    e_{e,2}^1(x,y) &= e_{e,1}^1(-x,y)\:, & \mbox{ for a.e. } (x,y) \in\Omega_2 \:, \\
                    e_{e,3}^1(x,y) &= e_{e,1}^1(-x,-y)\:, & \mbox{ for a.e. } (x,y) \in \Omega_3 \:, \\
                    e_{e,4}^1(x,y) &= e_{e,1}^1(x,-y)\:, & \mbox{ for a.e. } (x,y) \in \Omega_4 \:.
                \end{aligned}
            \end{equation*}

            \noindent $\bullet$ \textit{Iteration $k=1$, Neumann step:} 
            We compute the correction $\psi_{e,i}^1$ in each subdomain $\Omega_i$. For instance, taking $i=1$, we get in $\Omega_1$
            \begin{equation*}
                \begin{aligned}
                    -\Delta \psi_{e,1}^1 &= 0 \: \mbox{ in } \Omega_1 \:, \quad
                    \psi_{e,1}^1 = 0 \: \mbox{ on } \Gamma_1 \:, \\
                    \partial_{n_1}\psi_{e,1}^1 &= -\left( \partial_{n_1}e_{e,1}^1+\partial_{n_2}e_{e,2}^1\right) = -2\partial_{n_1} e_{e,1}^1 \: \mbox{ on } \Gamma_{12} \:, \\
                    \partial_{n_1}\psi_{e,1}^1 &= -\left(\partial_{n_1}e_{e,1}^1+\partial_{n_4}e_{e,4}^1\right) = -2\partial_{n_1} e_{e,1}^1\: \mbox{ on } \Gamma_{41} \:.
                \end{aligned}
            \end{equation*}
            Thus, uniqueness of $\psi_{e,1}^1$ in $H^1(\Omega_1)$ yields $\psi_{e,1}^1=-2e_{e,1}^1$ in $\Omega_1$. A similar reasoning applies to each $\psi_{e,i}^1$, $i\in\{2,3,4\}$, therefore the recombined correction simply reads: $\psi_e^1 = -2e_e^1$ in $\Omega\setminus\Gamma$.
        
        \noindent $\bullet$ \textit{Iteration $k\geq 2$:}
        At iteration $k=2$, the transmission condition for the Dirichlet step in $\Omega_i$ on each $\Gamma_{ij}$ is given by,
        $
            e_{e,i}^2 = e_{e,i}^1+\theta\left( \psi_{e,i}^1+\psi_{e,j}^1\right) = (1-4\theta) e_{e,i}^1\:.
        $
        Uniqueness of $e_{e,i}^2$ in $H^1(\Omega_i)$ enables us to conclude that $e_{e,i}^2=(1-4\theta) e_{e,i}^1$ in $\Omega_i$. Since this holds in each subdomain, the exact same reasoning as for iteration $k=1$ applies, and we get after the Neumann step
        $
            e_e^2 = (1-4\theta) e_e^1 
        $ and 
        $
            \psi_e^2 = -2(1-4\theta) e_e^1
        $
        in $\Omega\setminus \Gamma$.
        By induction, we obtain for any $k\geq 3$,
        $
            e_e^k = (1-4\theta)^{k-1} e_e^1
        $
        in $\Omega\setminus \Gamma$.
        This leads to the following estimates for the error on the whole domain $\Omega$ in the $L^2$-norm and the broken $H^1$-norm:
        $$
            \parallel u_e^k-u_e \parallel_{L^2(\Omega)} = \sum_{i\in \mathcal{I}} \parallel e_{e,i}^k \parallel_{L^2(\Omega_i)} \leq C|1-4\theta|^{k-1} \:, \\
        $$
        $$
            \sum_{i\in \mathcal{I}} \parallel u_{e,i}^k-u_{e,i} \parallel_{H^1(\Omega_i)} \leq C'|1-4\theta|^{k-1} \:,
        $$
        where $C$, $C'$ are strictly positive constants depending on the data and the geometry of the domain decomposition.

\end{proof}

\textbf{Case of the odd symmetric part.}
As for the Dirichlet-Neumann method, the NNM does not converge in general when applied to the odd symmetric part of \eqref{eqn:DirPb}.

\begin{theorem}
	The NNM applied to \eqref{subeqn:OddDirPb} is not well-posed. More specifically, taking $(u_o^0,\psi_o^0)$ as initial couple, there exists an integer $k_0>0$ such that the solution to the problem obtained at the $k_0$-th iteration is not unique. In addition, all possible solutions $u_o^{k_0}$ are singular at the cross-point, with a leading singularity of type $(\ln r)^2$.
    \label{thm:DivgOddDirDN}    
\end{theorem}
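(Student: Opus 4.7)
The plan is to mimic the iterative analysis of Theorem~\ref{thm:GeoCvgEvenDirNN}, while carefully tracking the local regularity of the iterates near the interior cross-point. Let $(r,\varphi)$ be polar coordinates centered at $(0,0)$. The odd symmetry of the initial couple $(u_o^0,\psi_o^0)$ propagates through the iteration, so the errors $e_{o,i}^k$ and corrections $\psi_{o,i}^k$ obey the point-reflection identities $e_{o,3}^k(x,y) = -e_{o,1}^k(-x,-y)$ and $e_{o,4}^k(x,y) = -e_{o,2}^k(-x,-y)$, and similarly for $\psi$. Contrary to the even case, however, these identities only relate diagonally opposite subdomains, so the sums $\psi_{o,i}^k + \psi_{o,j}^k$ and $\partial_{n_i} e_{o,i}^k + \partial_{n_j} e_{o,j}^k$ appearing in the transmission conditions never collapse into a scalar multiple of a single subdomain quantity.

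At iteration $k=1$, the compatibility hypothesis on $(u_o^0,\psi_o^0)$ ensures the Dirichlet transmission data are continuous at the cross-point, so that $e_o^1$ and $\psi_o^1$ both exist in $H^2(\Omega_i)$ for every $i$. Using the reflection identities, I would then evaluate the cross-point traces $\psi_{o,i}^1(0,0)$ explicitly: they satisfy $\psi_{o,1}^1(0,0) = -\psi_{o,3}^1(0,0)$ and $\psi_{o,2}^1(0,0) = -\psi_{o,4}^1(0,0)$, and both quantities are generically nonzero and mutually independent.

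At iteration $k=2$, the Dirichlet datum $e_{o,i}^1 + \theta(\psi_{o,i}^1 + \psi_{o,j}^1)$ prescribed on the two interfaces bounding a given $\Omega_i$ takes two different values at the cross-point, with jump $2\theta\,\psi_{o,j}^1(0,0)$ for either neighbour $j$ of $\Omega_i$, using the reflection identity to identify the two neighbours' traces up to sign. Hence $e_o^2$ is the harmonic extension of a Dirichlet datum discontinuous at $(0,0)$, and its leading local behaviour is proportional to the angular coordinate $\varphi$, so its normal trace on the interfaces behaves like $1/r$ near the cross-point. In the subsequent Neumann step, this $1/r$ datum lies outside $H^{-1/2}$ of the interface and $\psi_o^2$ is no longer available in $H^1$: its natural singular solution carries a $\varphi\log r$ term. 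One further Dirichlet step upgrades this to the announced $(\log r)^2$ leading singularity, which identifies $k_0$ as the iteration at which the standard Sobolev framework first breaks down. Non-uniqueness arises because, at that step, one may add to any admissible iterate a harmonic function that vanishes on the outer boundary and carries a strictly weaker $\log r$ singularity at the cross-point, producing a one-parameter family of solutions.

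The main obstacle is to characterise rigorously this singular solution and to justify both the dimension of the family of admissible perturbations and the precise identification of $k_0$. I would address it via Kondrat'ev--Mellin corner analysis at the $\pi/2$ cross-point under the mixed Dirichlet--Neumann conditions imposed by the method: the associated operator pencil yields the complete list of admissible singular exponents at this corner, from which both the $(\log r)^2$ leading behaviour and the dimension of the singular kernel responsible for the non-uniqueness can be read off unambiguously.
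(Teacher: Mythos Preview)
Your programme is the right one and coincides with what the paper invokes: the paper gives no independent argument here but refers to \cite[Theorems~7 and~8]{chaudet2022cross1}, where exactly the Kondrat'ev--Mellin corner analysis you propose is carried out for the Dirichlet--Neumann method. Your structural observation---that odd point-symmetry only links diagonally opposite subdomains, so the sums $\psi_{o,i}^{k}+\psi_{o,j}^{k}$ and $\partial_{n_i}e_{o,i}^{k}+\partial_{n_j}e_{o,j}^{k}$ never collapse as in the even case---is precisely the mechanism behind the incompatibility at the cross-point.

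There is, however, a concrete slip in your singularity bookkeeping that breaks the chain leading to $(\ln r)^2$. After the discontinuous Dirichlet data produce $e_{o,i}^2\sim c_i\,\varphi$, the Neumann data for $\psi_{o,i}^2$ behave like $g_1/r$ on one interface edge and $g_2/r$ on the other, with generically $g_1+g_2\neq 0$. A term $A\varphi$ alone can only match such data when $g_1=-g_2$, and your candidate $\varphi\log r$ has normal trace $\pm(\log r)/r$, not $\pm c/r$, so it cannot absorb $1/r$ data at all; following your chain, the next Dirichlet step would then only yield an $O(\log r)$ iterate, not $(\log r)^2$. The harmonic function that actually matches two independent $1/r$ coefficients is $\mathrm{Re}\bigl((\log z)^2\bigr)=(\log r)^2-\varphi^2$: since $\partial_\varphi\bigl[(\log r)^2-\varphi^2\bigr]=-2\varphi$, the combination $A\varphi+B\bigl[(\log r)^2-\varphi^2\bigr]$ has normal traces $(-A+2B\varphi_{\mathrm{edge}})/r$ and one finds $B=-(g_1+g_2)/\pi$. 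Hence the $(\log r)^2$ leading singularity already appears in $\psi_o$ at this Neumann step, and the non-uniqueness sits there too: $\log r$ is harmonic with vanishing normal derivative on both interface edges, so once the solution class is widened to admit $(\log r)^2-\varphi^2$, an arbitrary multiple of $\log r$ (corrected by a regular harmonic function to restore the outer Dirichlet condition) can be added. Both the non-uniqueness and the $(\log r)^2$ behaviour are then inherited by $u_o$ at the following Dirichlet step. Your Kondrat'ev--Mellin plan will make all of this rigorous; just replace $\varphi\log r$ by $(\log r)^2-\varphi^2$ in your hierarchy of corner singular functions.
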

\begin{theorem}
	If we let the NNM go beyond the ill-posed iteration $k_0$ from Theorem \ref{thm:DivgOddDirDN}, we end up with a sequence $\{u_o^k\}_{k\geq k_0}$ of non-unique iterates. Moreover, for each $k\geq k_0$, all possible $u_o^k$ are singular at the cross-point, with a leading singularity of type $(\ln r)^{2(k-k_0)+2}$.    
    \label{thm:PropSingOddDirDN}    
\end{theorem}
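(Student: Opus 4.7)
I would prove Theorem~\ref{thm:PropSingOddDirDN} by induction on $k\geq k_0$, taking Theorem~\ref{thm:DivgOddDirDN} as the base case. The guiding idea is to track, at the interior cross-point, how the Neumann and Dirichlet steps transform the principal singular mode of the iterate, and to show that each full NNM iteration beyond $k_0$ multiplies the leading logarithmic factor by $(\ln r)^2$.

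\textbf{Setup.} Let $m := k - k_0$. Working in polar coordinates $(r,\varphi)$ centered at the cross-point, I would decompose every iterate into a singular part and a more regular remainder, using the separation-of-variables ansatz $(\ln r)^p\,\Phi(\varphi) + r^{\alpha}(\text{smoother})$ for each quadrant $\Omega_i$. Because each $\Omega_i$ is a right-angled square and the transmission/boundary conditions on each edge are of pure Dirichlet or pure Neumann type, the homogeneous singular modes at the corner are generated by powers of $r$ (including $r^0=1$) and by $\ln r$, and can be computed in closed form. The odd symmetry inherited from Theorem~\ref{thm:DivgOddDirDN} reduces the analysis to a single subdomain, say $\Omega_1$, the other three being determined by reflection.

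\textbf{Inductive step.} Assume that each admissible $u_o^k$ carries, at the interior cross-point, a leading singularity of the form $c_k (\ln r)^{2m+2}\Phi_0(\varphi)$ plus terms more regular than $(\ln r)^{2m+2}$. I would proceed in two phases:

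(i) \emph{Neumann step.} Differentiating the leading singularity along a normal to an interface produces traces behaving like $r^{-1}(\ln r)^{2m+1}$ near the cross-point. Solving $-\Delta\psi_{o,i}^k = 0$ with these singular Neumann data on the two interfaces touching the cross-point and homogeneous Dirichlet on $\partial\Omega_i^0$, one can construct the singular part of $\psi_{o,i}^k$ explicitly by the ansatz above and verify that the Laplacian of $(\ln r)^{p}\Phi(\varphi)$ contributes a $(\ln r)^{p-2}$ term, so that matching the $r^{-1}(\ln r)^{2m+1}$ data in the Neumann trace forces $\psi_{o,i}^k$ to carry a $(\ln r)^{2m+3}$ leading singularity.

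(ii) \emph{Dirichlet step at iteration $k+1$.} The transmission datum $u_{o,i}^k-\theta(\psi_{o,i}^k+\psi_{o,j}^k)$ then inherits a $(\ln r)^{2m+3}$ trace near the cross-point. Solving the Dirichlet subproblem with this trace, the same singular-mode analysis shows that the unique compatible harmonic extension has leading behaviour $(\ln r)^{2m+4}=(\ln r)^{2(m+1)+2}$, which is the claim. Non-uniqueness is inherited automatically: the homogeneous Dirichlet step near the cross-point is exactly the one identified in the proof of Theorem~\ref{thm:DivgOddDirDN} as having a nontrivial kernel spanned by the singular mode, so the new iterate is only defined up to addition of that kernel element, with a new free parameter at every iteration.

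\textbf{Main obstacle.} The delicate point is not the mere production of log factors but the proof that the leading coefficient $c_k$ does not vanish, so that the singularity is genuinely of order $(\ln r)^{2m+2}$ and not lower. Two ingredients are required: a precise singular-function expansion (Kondrat'ev-type) that isolates the principal mode in each quadrant, and a computation showing that the composed operator ``Neumann step $\circ$ Dirichlet step'' applied to the mode $(\ln r)^{2m+2}\Phi_0$ produces a nonzero multiple of $(\ln r)^{2m+4}\Phi_0$. Once this single algebraic identity is verified on the leading angular profile $\Phi_0$, the induction closes, and the more regular remainders can be absorbed into the lower-order terms of the expansion at every step.
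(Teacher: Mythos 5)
The paper does not actually prove this theorem in situ: its ``proof'' is a citation of Theorems~7 and~8 of the companion paper \cite{chaudet2022cross1}, so your proposal has to be judged against what such a proof must contain. Your framework is the right one (induction on $k$, reduction to a single quadrant by odd symmetry, a Kondrat'ev-type singular expansion at the cross-point, and you correctly identify the non-vanishing of the leading coefficient as the crux). However, the quantitative heart of your argument --- the ``$+1$ per half-step'' bookkeeping --- does not survive the actual singular-function calculus at a right-angle corner. The harmonic modes carrying pure logarithmic singularities are $\mathrm{Re}\,(\ln z)^p$ and $\mathrm{Im}\,(\ln z)^p$; the normal derivative $\tfrac1r\partial_\varphi$ of $\mathrm{Re}\,(\ln z)^{2m+2}$ (the only mode containing a pure $(\ln r)^{2m+2}$ term) equals $-(2m+2)r^{-1}\mathrm{Im}\,(\ln z)^{2m+1}$, which vanishes identically on the edge $\varphi=0$ and is $O\bigl(r^{-1}(\ln r)^{2m}\bigr)$ on $\varphi=\pi/2$ --- not $r^{-1}(\ln r)^{2m+1}$ as you assert. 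Moreover, a Neumann solve in the quadrant with data $r^{-1}(\ln r)^{q}$ carrying \emph{different} coefficients on the two edges is resolved only by adding $\mathrm{Re}\,(\ln z)^{q+2}$, i.e.\ the log power jumps by \emph{two} in the Neumann half-step, while a Dirichlet solve with $(\ln r)^{p}$ traces is resolved by $\mathrm{Re}\,(\ln z)^{p}$ and $\mathrm{Im}\,(\ln z)^{p+1}$, whose leading pure power of $\ln r$ is still $p$ --- a jump of \emph{zero}. Your total increment of two per iteration agrees with the theorem, but the intermediate claims it rests on are wrong as stated, and since the entire proof \emph{is} this bookkeeping, that is a genuine gap rather than a presentational one.

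A second concrete issue is the source of non-uniqueness. Each local subproblem carries a Dirichlet condition on $\partial\Omega_i^0$, so it is uniquely solvable in $H^1(\Omega_i)$; there is no ``nontrivial kernel of the Dirichlet step'' in the variational setting, contrary to what you write. Non-uniqueness appears because, from iteration $k_0$ on, the interface data (normal-derivative jumps blowing up like $r^{-1}$ times logarithms) leave $H^{-1/2}$, so no $H^1$ solution exists and one must enlarge the solution space; in that larger space the \emph{homogeneous Neumann-step} problem acquires the nontrivial solution behaving like $\ln r$ (harmonic, zero Neumann data on the two interfaces since $\partial_\varphi \ln r=0$, vanishing on the outer boundary, but not in $H^1$ in 2D), and every admissible iterate is a singular particular solution plus an arbitrary multiple of this function. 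Making this precise, and proving that the leading coefficient produced by the composed Neumann/Dirichlet map is nonzero for odd-symmetric data (this is exactly where the even and odd cases part ways), is the actual content of the cited proofs and remains open in your sketch.
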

\begin{proof}
    The proofs of these results rely on the exact same arguments as those in the proofs of \cite[Theorem 7 and 8]{chaudet2022cross1}. 
\end{proof}

The previous results show that, at some point in the iterative process, the NNM method will lead to solving an ill-posed problem. This will generate a singular solution, and the generated singularity will then propagate through the following iterations.

\section{Toward a modified Neumann-Neumann method}

The conclusions from the previous section suggest that the transmission conditions of the standard NNM are naturally well adapted to the even symmetric part of the problem. Indeed, in this context, one may express at each iteration $k$ all local errors $e_{e,i}^k$ in terms of only one, say $e_{e,1}^k$, by symmetry. This motivates the search for different transmission conditions such that a similar symmetry property holds for the odd symmetric part of the problem. \\

\textbf{Fixing the odd symmetric case.}
In order to fix the well-posedness issue in the odd symmetric case, and obtain the symmetry property mentioned above, we propose a new distribution of Dirichlet and Neumann transmission conditions, as shown in Figure \ref{fig:SchDDMix}.

\begin{figure}
\centering
\resizebox{0.22\textwidth}{!}{
    \begin{tikzpicture}

    \draw[black, thin] (-2,-2) -- (2,-2) -- (2,2) -- (-2, 2) -- (-2,-2);
    \draw[black, densely dashed] (0,-2) -- (0,2); 
    \draw[black, densely dashed] (-2,0) -- (2,0); 
    
    
    \node[] at (0,-2) {\normalsize{$\bullet$}};
    \node[] at (0,2) {\normalsize{$\bullet$}};
    \node[] at (-2,0) {\normalsize{$\bullet$}};
    \node[] at (2,0) {\normalsize{$\bullet$}};
    \node[red] at (0,0) {\normalsize{$\bullet$}};
    
    \node[] at (-1.25,-1.25) {\Large{$\Omega_1$}};
    \node[] at (1.25,-1.25) {\Large{$\Omega_2$}};
    \node[] at (1.25,1.25) {\Large{$\Omega_3$}};
    \node[] at (-1.25,1.25) {\Large{$\Omega_4$}};

    \node[] at (-0.3,-1) {\large{\textbf{N}}};
    \node[] at (-1,-0.3) {\large{\textbf{D}}};
    \node[] at (0.3,1) {\large{\textbf{N}}};
    \node[] at (1,0.3) {\large{\textbf{D}}};
    \node[] at (0.3,-1) {\large{\textbf{N}}};
    \node[] at (-1,0.3) {\large{\textbf{D}}};
    \node[] at (-0.3,1) {\large{\textbf{N}}};
    \node[] at (1,-0.3) {\large{\textbf{D}}};
    
    \end{tikzpicture}
}
\hspace{4em}
\resizebox{0.22\textwidth}{!}{
    \begin{tikzpicture}

    \draw[black, thin] (-2,-2) -- (2,-2) -- (2,2) -- (-2, 2) -- (-2,-2);
    \draw[black, densely dashed] (0,-2) -- (0,2); 
    \draw[black, densely dashed] (-2,0) -- (2,0); 
    
    \node[] at (0,-2) {\normalsize{$\bullet$}};
    \node[] at (0,2) {\normalsize{$\bullet$}};
    \node[] at (-2,0) {\normalsize{$\bullet$}};
    \node[] at (2,0) {\normalsize{$\bullet$}};
    \node[red] at (0,0) {\normalsize{$\bullet$}};
    
    \node[] at (-1.25,-1.25) {\Large{$\Omega_1$}};
    \node[] at (1.25,-1.25) {\Large{$\Omega_2$}};
    \node[] at (1.25,1.25) {\Large{$\Omega_3$}};
    \node[] at (-1.25,1.25) {\Large{$\Omega_4$}};

    \node[] at (-0.3,-1) {\large{\textbf{D}}};
    \node[] at (-1,-0.3) {\large{\textbf{N}}};
    \node[] at (0.3,1) {\large{\textbf{D}}};
    \node[] at (1,0.3) {\large{\textbf{N}}};
    \node[] at (0.3,-1) {\large{\textbf{D}}};
    \node[] at (-1,0.3) {\large{\textbf{N}}};
    \node[] at (-0.3,1) {\large{\textbf{D}}};
    \node[] at (1,-0.3) {\large{\textbf{N}}};
    
    \end{tikzpicture}
}
  \caption{Transmission conditions of the mixed NNM for $u$ (left) and $\psi$ (right).}
  \label{fig:SchDDMix}
\end{figure}
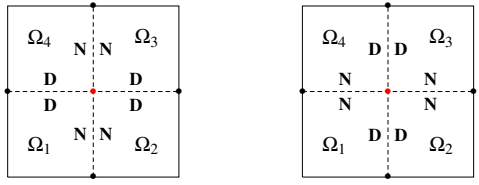
Let us introduce $\Gamma_D^1$, $\Gamma_N^1$, $\Gamma_D^2$, $\Gamma_N^2$ the sets containing all parts of the interface $\Gamma$ where transmission conditions of Dirichlet or Neumann type are imposed for $u$ (superscript 1) and for $\psi$ (superscript 2), that is :
\begin{equation*}
    \Gamma_D^1 := \{ \Gamma_{23}, \Gamma_{41} \}, \quad \Gamma_N^1 := \{ \Gamma_{12}, \Gamma_{34} \}, \quad
    \Gamma_D^2 := \{ \Gamma_{12}, \Gamma_{34} \}, \quad \Gamma_N^2 := \{ \Gamma_{23}, \Gamma_{41} \}.
\end{equation*}
Given an initial couple $(u^0,\psi^0)$ and relaxation parameter $\theta$, each iteration $k\geq 1$ of the proposed \textit{mixed} Neumann-Neumann method can be split into two steps:
\begin{itemize}
    \item  \textit{(First step)}  Solve for all $i\in \mathcal{I}$
    \begin{equation*}
        \begin{aligned}
            -\Delta u_i^k  &= f \: \mbox{ in } \Omega_i \:, \quad
            u_i^k = g \: \mbox{ on } \partial\Omega_i^0 \:, \\
            u_i^k &= u_i^{k-1} - \theta\left(\psi_i^{k-1}+\psi_j^{k-1}\right)\: \mbox{ on } \Gamma_{ij}, \: \forall j\in \mathcal{I} \: \mbox{ s.t. } \Gamma_{ij}\in\Gamma_D^1 \:, \\
            \partial_{n_i}u_i^k &= \partial_{n_i}u_i^{k-1} + (-1)^i \theta\left(\partial_{n_i}\psi_i^{k-1} +\partial_{n_j}\psi_j^{k-1}\right) \: \mbox{ on } \Gamma_{ij}, \: \forall j\in \mathcal{I} \: \mbox{ s.t. } \Gamma_{ij}\in\Gamma_N^1 \:.
        \end{aligned}
    \end{equation*}
        \item  \textit{(Second step)} Compute the correction $\psi^k$, that is, solve for all $i\in \mathcal{I}$
    \begin{equation*}
        \begin{aligned}
            -\Delta \psi_i^k  &= 0 \: \mbox{ in } \Omega_i \:, \quad
            \psi_i^k = 0 \: \mbox{ on } \partial\Omega_i^0 \:, \\
            \psi_i^k &= u_i^k-u_j^k \: \mbox{ on } \Gamma_{ij}, \: \forall j\in \mathcal{I} \: \mbox{ s.t. } \Gamma_{ij}\in\Gamma_D^2 \:, \\
            \partial_{n_i}\psi_i^k &= \partial_{n_i}u_i^k+\partial_{n_j}u_j^k\: \mbox{ on } \Gamma_{ij}, \: \forall j\in \mathcal{I} \: \mbox{ s.t. } \Gamma_{ij}\in\Gamma_N^2 \:.
        \end{aligned}
    \end{equation*}
\end{itemize}

With this choice of transmission conditions, we are able to prove that the proposed mixed NNM is convergent when applied to the odd symmetric part of \eqref{eqn:DirPb}.
\begin{theorem}
    Taking $(u^0_o,\psi^0_o)$ as initial couple for the mixed NNM applied to \eqref{subeqn:OddDirPb} produces a sequence $\left\{u_o^k\right\}_k$ that converges geometrically to the solution $u_o$ with respect to the $L^2$-norm and the broken $H^1$-norm for any $\theta\in (0,\frac{1}{2})$. Moreover, the convergence factor is given by $|1-4\theta|$, which also proves that the method becomes a direct solver for the specific choice $\theta=\frac{1}{4}$.
    \label{thm:GeoCvgOddDirNN}
\end{theorem}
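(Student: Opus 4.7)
The plan is to mirror the argument used in the proof of Theorem \ref{thm:GeoCvgEvenDirNN}, relying on the fact that the mixed transmission conditions (and in particular the alternating factor $(-1)^i$ on the Neumann interfaces) are tailored so that, for odd symmetric initial data, all four local errors $e_{o,i}^k := u_o|_{\Omega_i}-u_{o,i}^k$ at each iteration $k$ can be reduced to a single one, say $e_{o,1}^k$, through suitable signed reflection symmetries across the three reflections of the partition.

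First I would analyse iteration $k=1$. In each subdomain $\Omega_i$ the first step yields a mixed Dirichlet--Neumann boundary value problem for $e_{o,i}^1$: Dirichlet on $\partial\Omega_i^0$ and on the interface lying in $\Gamma_D^1$, Neumann on the interface lying in $\Gamma_N^1$. The Dirichlet condition on $\partial\Omega_i^0$ ensures well-posedness in $H^1(\Omega_i)$. Using the odd symmetry of $e_o^0$ and $\psi_o^0$, the $C^1$ compatibility across interfaces inherited from their $H^2$ regularity, together with the sign-alternating factor $(-1)^i$ in the Neumann transmission, one checks by direct verification that the ansatze for $e_{o,i}^1$, $i\in\{2,3,4\}$, expressed as appropriately signed reflections of $e_{o,1}^1$, satisfy all the prescribed boundary data on $\partial\Omega_i$. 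Uniqueness of the mixed problem then forces the symmetry relations to hold.

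Next I would turn to the correction step. In each $\Omega_i$, $\psi_{o,i}^1$ solves a mixed problem whose Dirichlet data $u_{o,i}^1-u_{o,j}^1 = e_{o,j}^1-e_{o,i}^1$ on the interface in $\Gamma_D^2$ and Neumann data $\partial_{n_i}u_{o,i}^1+\partial_{n_j}u_{o,j}^1 = -(\partial_{n_i}e_{o,i}^1+\partial_{n_j}e_{o,j}^1)$ on the interface in $\Gamma_N^2$ (using that $u_o$ solves \eqref{subeqn:OddDirPb} and therefore has continuous Dirichlet and Neumann traces across the skeleton) simplify via the symmetries above to $-2\,e_{o,i}^1$ and $-2\,\partial_{n_i}e_{o,i}^1$ respectively. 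Uniqueness then yields $\psi_{o,i}^1 = -2\,e_{o,i}^1$ in $\Omega_i$, and globally $\psi_o^1 = -2\,e_o^1$ in $\Omega\setminus\Gamma$.

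Finally, for iteration $k\geq 2$, the Dirichlet transmission of the first step gives on each interface in $\Gamma_D^1$ the recursion $e_{o,i}^k = e_{o,i}^{k-1}+\theta(\psi_{o,i}^{k-1}+\psi_{o,j}^{k-1}) = (1-4\theta)\,e_{o,i}^{k-1}$, while on each interface in $\Gamma_N^1$ the Neumann transmission, combined with the $(-1)^i$ sign and the symmetries between $\psi_{o,i}^{k-1}$ and $\psi_{o,j}^{k-1}$, produces the analogous rescaling of the normal trace. Uniqueness of the mixed BVP in each subdomain then forces $e_{o,i}^k = (1-4\theta)\,e_{o,i}^{k-1}$ in $\Omega_i$, so by induction $e_o^k = (1-4\theta)^{k-1} e_o^1$ in $\Omega\setminus\Gamma$. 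The $L^2$ and broken $H^1$ estimates follow verbatim from the argument at the end of the proof of Theorem \ref{thm:GeoCvgEvenDirNN}, yielding geometric convergence with rate $|1-4\theta|$ and the direct solver property at $\theta=\tfrac14$. The main obstacle is the sign bookkeeping in the first step: one must verify, interface by interface, that the signed reflection ansatz sends each boundary datum (Dirichlet or Neumann) on one side to its correct counterpart on the other, a check that pivots entirely on matching the odd symmetry of the initial data with the orientation of the outward normals encoded by $(-1)^i$.
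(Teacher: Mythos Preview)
Your proposal is correct and follows essentially the same route as the paper's own proof: establish the signed reflection relations among the $e_{o,i}^1$ from the mixed transmission conditions and the odd symmetry of the initial data, deduce $\psi_o^1=-2e_o^1$ from uniqueness in the correction step, and then obtain $e_o^k=(1-4\theta)^{k-1}e_o^1$ by induction. The paper's version differs only in that it writes out the explicit reflection formulas $e_{o,2}^1(x,y)=-e_{o,1}^1(-x,y)$, $e_{o,3}^1(x,y)=-e_{o,1}^1(-x,-y)$, $e_{o,4}^1(x,y)=e_{o,1}^1(x,-y)$ and verifies one of them interface by interface, which is precisely the sign bookkeeping you flag as the main obstacle.
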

\begin{proof}
    We follow the same steps as in the proof of Theorem \ref{thm:GeoCvgEvenDirNN}.

    \noindent $\bullet$ \textit{Iteration $k=1$, Dirichlet step:} 
    In each $\Omega_i$, $i\in \mathcal{I}$, the odd errors satisfy
            \begin{equation*}
                \begin{aligned}
                    -\Delta e_{o,i}^1 &= 0 \: \mbox{ in } \Omega_i \:, \quad
                    e_{o,i}^1 = 0 \: \mbox{ on } \partial\Omega_i^0 \:, \\
                    e_{o,i}^1 &= e_{o,i}^0+\theta\left( \psi_{o,i}^0+\psi_{o,j}^0\right)  \: \mbox{ on } \Gamma_{ij}, \: \forall j\in \mathcal{I}\: \mbox{ s.t. } \Gamma_{ij}\in\Gamma_D^1 \:, \\
                    \partial_{n_i}e_{o,i}^1 &= \partial_{n_i}e_{o,i}^{0} - (-1)^i\theta\left(\partial_{n_i}\psi_{o,i}^{0} +\partial_{n_j}\psi_{o,j}^{0}\right) \: \mbox{ on } \Gamma_{ij}, \: \forall j\in \mathcal{I} \: \mbox{ s.t. } \Gamma_{ij}\in\Gamma_N^1 \:.
                \end{aligned}
            \end{equation*}
            These problems are well-posed since $(u_o^0,\psi_o^0)$ is compatible with the odd symmetric part of the boundary condition. This time, using the mixed conditions enforced along $\Gamma$ together with the odd symmetry properties of $e_o^0$ and $\psi_o^0$, we can deduce that
            \begin{equation*}
                \begin{aligned}
                    e_{o,2}^1(x,y) &= -e_{o,1}^1(-x,y)\:, & \mbox{ for a.e. } (x,y) \in\Omega_2 \:, \\
                    e_{o,3}^1(x,y) &= -e_{o,1}^1(-x,-y)\:, & \mbox{ for a.e. } (x,y) \in \Omega_3 \:, \\
                    e_{o,4}^1(x,y) &= e_{o,1}^1(x,-y)\:, & \mbox{ for a.e. } (x,y) \in \Omega_4 \:.
                \end{aligned}
            \end{equation*}
            Indeed, for the first equality, taking $(x,y)\in\Omega_2$, we have on $\Gamma_{23}$ and $\Gamma_{12}$
            \begin{equation*}
            \begin{aligned}
                e_{o,2}^1(x,0) &= e_{o,2}^0(x,0)+\theta\left( \psi_{o,2}^0(x,0)+\psi_{o,3}^0(x,0)\right)  \\
                &= -e_{o,1}^0(-x,0)-\theta\left( \psi_{o,4}^0(-x,0)+\psi_{o,1}^0(-x,0)\right) = -e_{o,1}^1(-x,0)\:, \\
                (\partial_{n_2} e_{o,2}^1)(0,y) &= -(\partial_x e_{o,2}^0)(0,y)-\theta\left( (\partial_x\psi_{o,2}^0)(0,y)+(\partial_x\psi_{o,1}^0)(0,y)\right)  \\
                &= -(\partial_x e_{o,1}^0)(0,y)-\theta\left( (\partial_x\psi_{o,1}^0)(0,y)+(\partial_x\psi_{o,2}^0)(0,y)\right) \\
                &= -(\partial_{n_1} e_{o,1}^1)(0,y) = -(\partial_{n_2} e_{o,1}^1(-\,\cdot,\cdot))(0,y)\:.
            \end{aligned}
            \end{equation*}
            Then uniqueness of the solution to the subproblem in $\Omega_2$ yields $e_{o,2}^1= -e_{o,1}^1(-\,\cdot,\cdot)$ a.e. in $\Omega_2$. The two other equalities are obtained using similar arguments, see Figure \ref{fig:ErrMNNSymmetry} for an illustration of this symmetry property.   

            \noindent $\bullet$ \textit{Iteration $k=1$, Neumann step:} 
            For $i=1$, we get in $\Omega_1$
            \begin{equation*}
                \begin{aligned}
                    -\Delta \psi_{o,1}^1 &= 0 \: \mbox{ in } \Omega_1 \:, \quad
                    \psi_{o,1}^1 = 0 \: \mbox{ on } \Gamma_1 \:, \\
                    \psi_{o,1}^1 &= -e_{o,1}^1+e_{o,2}^1 = -2 e_{o,1}^1\: \mbox{ on } \Gamma_{12} \:, \\
                    \partial_{n_1}\psi_{o,1}^1 &= -\left( \partial_{n_1}e_{o,1}^1+\partial_{n_4}e_{o,4}^1\right) = -2\partial_{n_1} e_{o,1}^1 \: \mbox{ on } \Gamma_{41} \:.
                \end{aligned}
            \end{equation*}
            Therefore $\psi_{o,1}^1=-2e_{o,1}^1$ in $\Omega_1$. Extending these arguments to the other subdomains yields a recombined correction $\psi_o^1 = -2e_o^1$ in $\Omega\setminus\Gamma$.
        
        \noindent $\bullet$ \textit{Iteration $k\geq 2$:}
        At iteration $k=2$, the transmission conditions for the first step in $\Omega_1$ are given by
        \begin{equation*}
        \begin{aligned}
             e_{o,1}^2 &= e_{o,1}^1+\theta\left( \psi_{o,1}^1+\psi_{o,4}^1\right) = (1-4\theta) e_{o,1}^1 \: \mbox{ on } \Gamma_{41}\:, \\
             \partial_{n_1}e_{o,1}^2 &= \partial_{n_1}e_{o,1}^{1} + \theta\left(\partial_{n_1}\psi_{o,1}^{1} +\partial_{n_2}\psi_{o,2}^{1}\right) = (1-4\theta)\partial_{n_1}e_{o,1}^{1} \: \mbox{ on } \Gamma_{12}\:.
        \end{aligned}
        \end{equation*}
        This implies that $e_{o,1}^2=(1-4\theta) e_{o,1}^1$ in $\Omega_1$. Using the same arguments in the other subdomains and performing the second step leads to
        $
            e_o^2 = (1-4\theta) e_o^1
        $
        and $\psi_o^2 = -2(1-4\theta) e_o^1$ in $\Omega\setminus \Gamma$.
        As in the proof of Theorem \ref{thm:GeoCvgEvenDirNN}, we obtain by induction that, for any $k\geq 3$,
        $
            e_o^k = (1-4\theta)^{k-1} e_o^1
        $
        in $\Omega\setminus \Gamma$.
        The desired error estimates are then deduced from the last relation.
        
\end{proof}

\textbf{The new NNM.}
Here are the different steps of our \textit{new} NNM to solve \eqref{eqn:DirPb} starting from an initial couple $(u^0,\psi^0)$ compatible with the Dirichlet boundary condition, and a relaxation parameter $\theta\in (0,1/2)$.
\begin{enumerate}
    \item Decompose the data into their even/odd symmetric parts to get \eqref{subeqn:EvenDirPb} and \eqref{subeqn:OddDirPb}.
    \item Solve in parallel:
    \begin{itemize}
        \item \eqref{subeqn:EvenDirPb} using the standard NNM starting from $(u_e^0,\psi_e^0)$,
        \item \eqref{subeqn:OddDirPb} using the mixed NNM starting from $(u_o^0,\psi_o^0)$.
    \end{itemize}
    \item Recompose the solution $u=u_e+u_o$.
\end{enumerate}

\begin{remark}
    It is actually enough to solve for $u_e$ and $u_o$ in $\Omega_1\cup\Omega_2$, and then extend them to the whole domain $\Omega$ by symmetry. One iteration of the new NNM thus costs the same as one iteration of the original NNM.
\end{remark}

\section{Numerical experiments}

In order to test our new NNM, we apply it to two simple benchmarks: one with even symmetric data (Example 1: $g=0$ and $f=1$) and one with odd symmetric data (Example 2: $g=0$ and $f=x+y+h$ where $h=\sin(2\phi)$ in $\Omega_1$, $h=-\sin(2\phi)$ in $\Omega_3$ and $h=0$ in $\Omega_2\cup\Omega_4$, with $\phi$ being the angle in polar coordinates, see Figure \ref{fig:ErrMNNSymmetry}). The discretization of \eqref{eqn:DirPb} is performed using a standard five point finite difference scheme on a cartesian grid of meshsize $h=0.01$.
%
%
When two Dirichlet conditions meet at a corner, the value of $g$ at this node is set to the average of the two values. In addition, when Dirichlet and Neumann conditions meet at a corner, we choose the Dirichlet one to be enforced at this node.
The results obtained show that the method behaves as predicted by Theorem \ref{thm:GeoCvgEvenDirNN} and Theorem \ref{thm:GeoCvgOddDirNN}. Indeed, for $\theta=\frac{1}{4}$, the method converges after two iterations, see the left column in Figure \ref{fig:ErrMNNEvenOdd}. And for $\theta\in(0,\frac12)$, $\theta\neq \frac14$, the method converges geometrically to the solution, see the right column in Figure \ref{fig:ErrMNNEvenOdd}.

\begin{figure}[t]
  \begin{center}
    \includegraphics[width=0.45\textwidth]{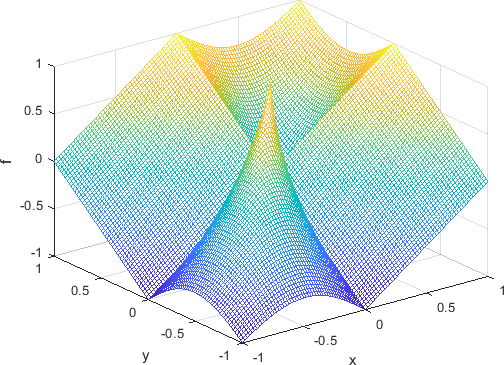}
    \hspace{1em}
    \includegraphics[width=0.45\textwidth]{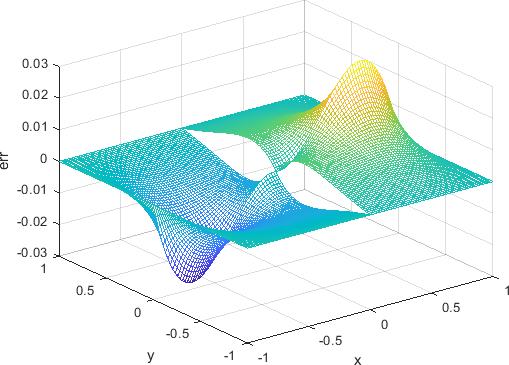}
  \end{center}
  \caption{Source term $f$ (left), and absolute error at iteration 1 for $\theta=0.25$ (right), in Example 2.}
  \label{fig:ErrMNNSymmetry}
\end{figure}

\begin{figure}[t]
  \begin{center}
    \includegraphics[width=0.5\textwidth]{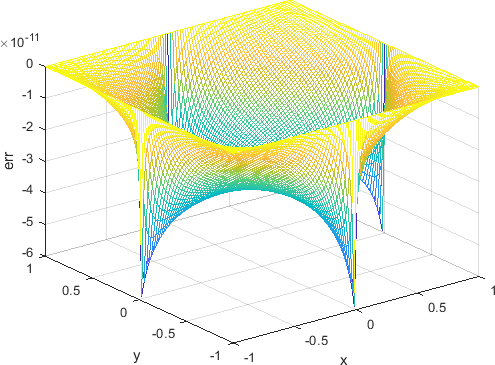}
    \hspace{1em}
    \includegraphics[width=0.45\textwidth]{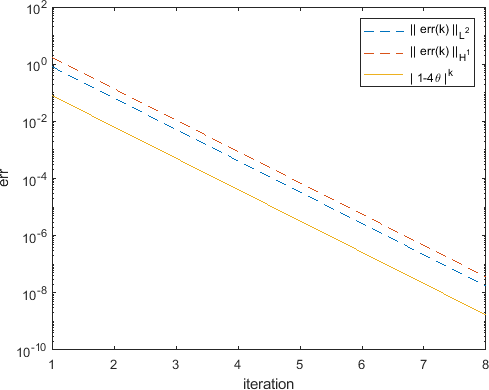}
    \includegraphics[width=0.5\textwidth]{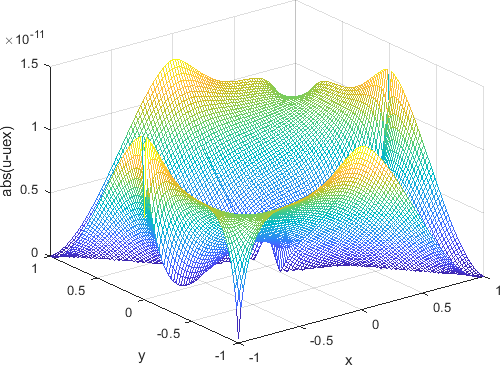}
    \hspace{1em}
    \includegraphics[width=0.45\textwidth]{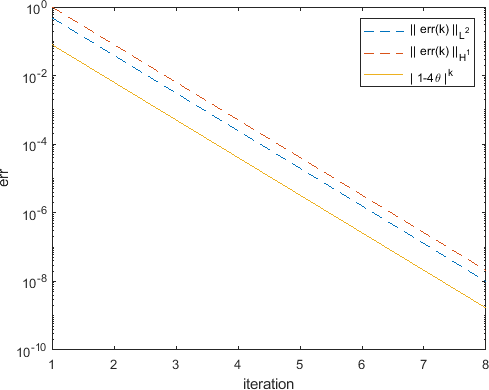}
    \end{center}
  \caption{Absolute error at iteration 2 for $\theta=0.25$ (left column), and error curve for $\theta=0.23$ (right column), in Example 1 (top) and Example 2 (bottom).}
  \label{fig:ErrMNNEvenOdd}
\end{figure}

In this short paper, we gave a complete analysis of the standard NNM in a simple configuration involving one cross-point. The even/odd decomposition showed that the NNM was able to treat very efficiently the even symmetric part of the solution, while it faced well-posedness and convergence issues when applied to the odd symmetric part of the solution.
Based on this observation, we proposed new mixed transmission conditions of Dirichlet/Neumann type to treat efficiently the odd symmetric part. We proved that the newly proposed NNM built upon a combination between the standard NNM and the new mixed method is convergent, and we validated this property by some numerical experiments. A natural extension of this work would be the 3D case of a cube divided into eight subcubes. It would also be interesting to generalize the notion of even/odd symmetry to the case of more general cross-points (not necessarily rectilinear, or involving a number of subdomains $N\neq 4$).

\bibliographystyle{plain}
\bibliography{bibliography}

\end{document}